\newcommand{\abs}[1]{\left|#1\right|}
\newcommand{\bdry}[1]{\partial #1}
\newcommand{\dint}{\ds{\int}}
\newcommand{\ds}[1]{\displaystyle #1}
\newcommand{\eps}{\varepsilon}
\newcommand{\F}{{\cal F}}
\newcommand{\hquad}{\hspace{0.08in}}
\newcommand{\norm}[2][]{\left\|#2\right\|_{#1}}
\renewcommand{\o}{\text{o}}
\newcommand{\PS}[1]{$(\text{PS})_{#1}$}
\newcommand{\R}{\mathbb R}
\newcommand{\RP}{\R \text{P}}
\newcommand{\seq}[1]{\left(#1\right)}
\newcommand{\set}[1]{\left\{#1\right\}}
\newcommand{\wto}{\rightharpoonup}
\newcommand{\Z}{\mathbb Z}
\DeclareMathOperator{\divg}{div}
\newenvironment{enumroman}{\begin{enumerate}

}{\end{enumerate}}
\newtheorem{corollary}{Corollary}[section]
\newtheorem{proposition}[corollary]{Proposition}
\newtheorem{theorem}[corollary]{Theorem}
\theoremstyle{definition}
\newtheorem{definition}[corollary]{Definition}
\theoremstyle{remark}
\newtheorem{remark}[corollary]{Remark}
\numberwithin{equation}{section}
\title{\bf On the critical $p$-Kirchhoff equation\thanks{{\em MSC2010:} Primary 35J92, Secondary 35B33, 58E05
\newline \indent\; {\em Key Words and Phrases:} $p$-Kirchhoff equation, critical Sobolev exponent, existence, multiplicity, Morse theory, Fadell-Rabinowitz cohomological index, cohomological local splitting}}
\author{\bf Erisa Hasani and Kanishka Perera\\
Department of Mathematical Sciences\\
Florida Institute of Technology\\
Melbourne, FL 32901, USA\\
\em ehasani2016@my.fit.edu \& kperera@fit.edu}
\date{}
\begin{document}

\maketitle

\begin{abstract}
We study a nonlocal elliptic equation of $p$-Kirchhoff type involving the critical Sobolev exponent. First we give sufficient conditions for the \PS{} condition to hold. Then we prove some existence and multiplicity results using tools from Morse theory, in particular, the notion of a cohomological local splitting and eigenvalues based on the Fadell-Rabinowitz cohomological index.
\end{abstract}

\section{Introduction and statement of results}

Nonlocal elliptic equations of $p$-Kirchhoff type involving critical Sobolev exponents have been recently studied in the literature (see, e.g., Hamydy et al.\! \cite{MR2832280}, Ourraoui \cite{MR3186916}, Zhou and Song \cite{MR3430291}, Li et al.\! \cite{MR3503048}, Li et al.\! \cite{MR3651881}, and the references therein). In this paper we study the existence and multiplicity of solutions to the critical $p$-Kirchhoff equation
\begin{equation} \label{1}
\left\{\begin{aligned}
- h\bigg(\int_\Omega |\nabla u|^p\, dx\bigg)\, \Delta_p\, u & = f(x,u) + |u|^{p^\ast - 2}\, u && \text{in } \Omega\\[10pt]
u & = 0 && \text{on } \bdry{\Omega},
\end{aligned}\right.
\end{equation}
where $\Omega$ is a bounded domain in $\R^N,\, N \ge 2$, $\Delta_p\, u = \divg (|\nabla u|^{p-2}\, \nabla u)$ is the $p$-Laplacian of $u$, $1 < p < N$, $h : [0,\infty) \to [0,\infty)$ is a continuous and nondecreasing function, $p^\ast = Np/(N - p)$ is the critical Sobolev exponent, and $f$ is a Carath\'{e}odory function on $\Omega \times \R$ satisfying the subcritical growth condition
\begin{equation} \label{2}
|f(x,t)| \le a_1 |t|^{q-1} + a_2 \quad \text{for a.a.\! } x \in \Omega \text{ and all } t \in \R
\end{equation}
for some constants $a_1, a_2 > 0$ and $1 < q < p^\ast$. A model case is
\begin{equation} \label{9}
h(t) = 1 + bt^{\gamma - 1},
\end{equation}
where $b > 0$ and $\gamma > 1$.

Weak solutions of problem \eqref{1} coincide with critical points of the $C^1$-functional
\begin{equation} \label{20}
E(u) = \frac{1}{p}\, H\bigg(\int_\Omega |\nabla u|^p\, dx\bigg) - \int_\Omega F(x,u)\, dx - \frac{1}{p^\ast} \int_\Omega |u|^{p^\ast} dx, \quad u \in W^{1,\,p}_0(\Omega),
\end{equation}
where
\[
H(t) = \int_0^t h(s)\, ds, \qquad F(x,t) = \int_0^t f(x,s)\, ds
\]
are the primitives of $h$ and $f$, respectively. As is typical with problems of critical growth, the main difficulty here is the lack of compactness. Recall that the functional $E$ satisfies the Palais-Smale compactness condition, or the \PS{} condition for short, if every sequence $\seq{u_j} \subset W^{1,\,p}_0(\Omega)$ such that $E(u_j)$ is bounded and $E'(u_j) \to 0$, called a \PS{} sequence, has a convergent subsequence. First we give a sufficient condition for every bounded \PS{} sequence to have a convergent subsequence. Let
\begin{equation} \label{3}
S_{N,\,p} = \inf_{u \in W^{1,\,p}_0(\Omega) \setminus \set{0}}\, \frac{\dint_\Omega |\nabla u|^p\, dx}{\left(\dint_\Omega |u|^{p^\ast} dx\right)^{p/p^\ast}}
\end{equation}
be the best Sobolev constant.

\begin{theorem} \label{Theorem 1}
If \eqref{2} holds and
\begin{equation} \label{4}
h(t) > S_{N,\,p}^{- p^\ast\!/p}\, t^{p^\ast\!/p - 1} \quad \forall t > 0,
\end{equation}
then every bounded sequence $\seq{u_j} \subset W^{1,\,p}_0(\Omega)$ such that $E'(u_j) \to 0$ has a convergent subsequence.
\end{theorem}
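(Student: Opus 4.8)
The plan is to combine a concentration-compactness argument with the monotonicity ($(S_+)$) property of the $p$-Laplacian. Passing to a subsequence, I would first use the boundedness of $\seq{u_j}$ to arrange $u_j \wto u$ in $W^{1,\,p}_0(\Omega)$, $u_j \to u$ in $L^q(\Omega)$ for every $q < p^\ast$ and a.e.\ in $\Omega$, and $t_j := \int_\Omega |\nabla u_j|^p\, dx \to \alpha$ for some $\alpha \ge 0$; since $h$ is continuous this gives $h(t_j) \to h(\alpha)$. If $\alpha = 0$ then $\norm{u_j}^p = t_j \to 0$, so $u_j \to 0 = u$ in $W^{1,\,p}_0(\Omega)$ and we are done; hence I would assume $\alpha > 0$, whence $h(\alpha) > 0$ by \eqref{4}.

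Next I would invoke the concentration-compactness principle of Lions: along a further subsequence there are an at most countable index set $J$, distinct points $\set{x_k}_{k \in J} \subset \overline{\Omega}$, and numbers $\mu_k, \nu_k > 0$ with
\[
|\nabla u_j|^p\, dx \wto \mu \ge |\nabla u|^p\, dx + \sum_{k \in J} \mu_k\, \delta_{x_k}, \qquad |u_j|^{p^\ast}\, dx \wto \nu = |u|^{p^\ast}\, dx + \sum_{k \in J} \nu_k\, \delta_{x_k}
\]
weakly-$\ast$ as measures on $\overline{\Omega}$, and with $\mu_k \ge S_{N,\,p}\, \nu_k^{p/p^\ast}$. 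The heart of the argument is to show $J = \emptyset$. Fixing $k \in J$ and a cutoff $\phi$ that equals $1$ near $x_k$ and is supported in a small ball, I would test the relation $E'(u_j)(\phi\, u_j) \to 0$, letting $j \to \infty$ and then the radius $\to 0$. The subcritical term drops out by \eqref{2} and the strong $L^q$ convergence, the cross term $\int u_j\, |\nabla u_j|^{p-2}\nabla u_j \cdot \nabla \phi\, dx$ vanishes by H\"older together with $\int_{\text{supp}\,\phi} |u|^{p^\ast}\, dx \to 0$, and the remaining terms yield the identity $h(\alpha)\, \mu_k = \nu_k$.

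Then I would combine the three relations $h(\alpha)\,\mu_k = \nu_k$, $\mu_k \ge S_{N,\,p}\,\nu_k^{p/p^\ast}$, and $\alpha = \mu(\overline{\Omega}) \ge \mu_k$. Eliminating $\nu_k$ gives $\mu_k^{p/N} \ge S_{N,\,p}\, h(\alpha)^{p/p^\ast}$, hence $\mu_k \ge S_{N,\,p}^{N/p}\, h(\alpha)^{(N-p)/p}$, and feeding this into $\alpha \ge \mu_k$ and rearranging produces $h(\alpha) \le S_{N,\,p}^{-p^\ast\!/p}\, \alpha^{p^\ast\!/p - 1}$, which contradicts \eqref{4} since $\alpha > 0$. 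Therefore $J = \emptyset$, so $\int_\Omega |u_j|^{p^\ast}\, dx \to \int_\Omega |u|^{p^\ast}\, dx$, and the Brezis-Lieb lemma upgrades this to $u_j \to u$ strongly in $L^{p^\ast}(\Omega)$.

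Finally, I would test with $u_j - u$: since $E'(u_j)(u_j - u) \to 0$, the subcritical term vanishes by \eqref{2}, and the critical term vanishes because $|u_j|^{p^\ast - 2} u_j$ is bounded in $L^{p^\ast/(p^\ast-1)}(\Omega)$ while $u_j - u \to 0$ in $L^{p^\ast}(\Omega)$. Dividing by $h(t_j) \to h(\alpha) > 0$ leaves $\int_\Omega |\nabla u_j|^{p-2}\nabla u_j \cdot \nabla(u_j - u)\, dx \to 0$, and subtracting the analogous expression with $|\nabla u|^{p-2}\nabla u$ (which tends to $0$ by weak convergence) the standard $(S_+)$ property of the $p$-Laplacian forces $u_j \to u$ in $W^{1,\,p}_0(\Omega)$. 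I expect the main obstacle to be the concentration-compactness step---specifically, extracting the exact identity $h(\alpha)\,\mu_k = \nu_k$ while correctly handling the nonlocal factor $h(t_j)$ and the gradient cross term---since everything downstream is a clean algebraic manipulation against \eqref{4}.
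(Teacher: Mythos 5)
Your proposal is correct, but it takes a genuinely different (and heavier) route than the paper. The paper's proof is entirely elementary: setting $t = \lim \norm{u_j - u}^p$ and $s = \lim \int_\Omega |\nabla u_j|^p\, dx = \int_\Omega |\nabla u|^p\, dx + t$ (Br\'ezis--Lieb), it tests $E'(u_j) \to 0$ against $v = u_j$ and against $v = u$, subtracts the two resulting identities, and applies the Br\'ezis--Lieb lemma together with the definition \eqref{3} of $S_{N,\,p}$ to the difference, obtaining $t\, h(s) \le S_{N,\,p}^{- p^\ast\!/p}\, t^{p^\ast\!/p} + \o(1)$; since $h$ is nondecreasing and $s \ge t$, this gives $t\, h(t) \le S_{N,\,p}^{- p^\ast\!/p}\, t^{p^\ast\!/p}$ in the limit, whence $t = 0$ by \eqref{4}, and strong convergence in $W^{1,\,p}_0(\Omega)$ is immediate --- no concentration-compactness, no cutoff functions, no $(S_+)$ step. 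Your approach buys two things in exchange for the extra machinery. First, you never use that $h$ is nondecreasing: your contradiction invokes \eqref{4} only at the single value $\alpha = \lim \int_\Omega |\nabla u_j|^p\, dx$, whereas monotonicity is exactly what lets the paper pass from $h(s)$ to $h(t)$; so your argument covers merely continuous $h$ satisfying \eqref{4}. Second, you sidestep the limit passage behind the paper's identity \eqref{8}, which tacitly uses that $|\nabla u_j|^{p-2}\, \nabla u_j \wto |\nabla u|^{p-2}\, \nabla u$ weakly in $L^{p/(p-1)}$ --- a fact that does not follow from weak convergence of $u_j$ alone --- since you only ever test against functions built from $u_j$ itself and then conclude via $(S_+)$. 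The price is that excluding the atoms only yields $u_j \to u$ in $L^{p^\ast}(\Omega)$, so the final $(S_+)$ argument is unavoidable in your scheme, while the paper gets norm convergence of the gradients directly from $t = 0$. One small precision: your cutoff computation actually produces $h(\alpha)\, \mu(\set{x_k}) = \nu_k$ rather than $h(\alpha)\, \mu_k = \nu_k$; since $\mu(\set{x_k}) \ge \mu_k \ge S_{N,\,p}\, \nu_k^{p/p^\ast}$ and $\alpha \ge \mu(\set{x_k})$, your algebra goes through verbatim with $\mu(\set{x_k})$ in place of $\mu_k$.
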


In the model case \eqref{9}, the inequality \eqref{4} holds in each of the following cases:
\begin{enumroman}
\item $\gamma = p^\ast/p$ and $b \ge S_{N,\,p}^{- p^\ast\!/p}$,
\item $\gamma > p^\ast/p$ and
    \[
    b^{p^\ast\!/p - 1} > \frac{(\gamma - p^\ast/p)^{\gamma - p^\ast\!/p}\, (p^\ast/p - 1)^{p^\ast\!/p - 1}}{(\gamma - 1)^{\gamma - 1}}\, S_{N,\,p}^{- (p^\ast\!/p)(\gamma - 1)}.
    \]
\end{enumroman}

\begin{remark}
The case where $1 < \gamma < p^\ast\!/p$ can be handled using arguments similar to those used in \cite{1078-0947_2021106}, which only considered the semilinear case $p = 2$.
\end{remark}

Next we give sufficient conditions for the existence of a solution to problem \eqref{1}.

\begin{theorem} \label{Theorem 2}
If \eqref{2} and \eqref{4} hold, and
\begin{equation} \label{11}
H(t) \ge \frac{p}{p^\ast}\, S_{N,\,p}^{- p^\ast\!/p}\, t^{p^\ast\!/p} + a_3\, t^r - a_4 \quad \forall t \ge 0
\end{equation}
for some constants $a_3, a_4 > 0$ and $r > q/p$, then problem \eqref{1} has a solution.
\end{theorem}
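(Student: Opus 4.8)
The plan is to show that hypotheses \eqref{2}, \eqref{4}, and \eqref{11} together make $E$ coercive and force it to satisfy the \PS{} condition, after which a minimizer of $E$ furnishes the desired solution. The crucial structural point is that \eqref{11} is calibrated exactly to absorb the critical term in the energy.

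First I would prove coercivity. Setting $t = \dint_\Omega |\nabla u|^p\, dx$, the Sobolev inequality \eqref{3} gives $\dint_\Omega |u|^{p^\ast} dx \le S_{N,\,p}^{- p^\ast\!/p}\, t^{p^\ast\!/p}$, so that $\frac{1}{p^\ast} \dint_\Omega |u|^{p^\ast} dx \le \frac{1}{p^\ast}\, S_{N,\,p}^{- p^\ast\!/p}\, t^{p^\ast\!/p}$. Feeding the lower bound \eqref{11} into the definition \eqref{20} of $E$, the two multiples of $S_{N,\,p}^{- p^\ast\!/p}\, t^{p^\ast\!/p}$ cancel exactly, leaving
\[
E(u) \ge \frac{a_3}{p}\, t^r - \dint_\Omega F(x,u)\, dx - \frac{a_4}{p}.
\]
Integrating \eqref{2} yields $\abs{F(x,t)} \le (a_1/q)\, \abs{t}^q + a_2\, \abs{t}$, and since $q < p^\ast$ the embedding $W^{1,\,p}_0(\Omega) \hookrightarrow L^q(\Omega)$ bounds $\dint_\Omega F(x,u)\, dx$ by $C\, (t^{q/p} + t^{1/p})$. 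Because $r > q/p > 1/p$, the term $\frac{a_3}{p}\, t^r$ dominates, so $E(u) \to \infty$ as $\norm{u} \to \infty$; in particular $E$ is bounded below.

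The second step is to upgrade coercivity to the full \PS{} condition and then minimize. If $\seq{u_j}$ is a \PS{} sequence then $E(u_j)$ is bounded, and coercivity forces $\seq{u_j}$ to be bounded in $W^{1,\,p}_0(\Omega)$; Theorem \ref{Theorem 1}, which applies since \eqref{2} and \eqref{4} hold, then produces a convergent subsequence, so $E$ satisfies \PS{}. Thus $E$ is a coercive $C^1$ functional, bounded below, satisfying the \PS{} condition. Finally I would apply Ekeland's variational principle to a minimizing sequence to obtain a bounded \PS{} sequence at the level $\inf E$; by the \PS{} condition it converges along a subsequence to some $u$ with $E(u) = \inf E$ and $E'(u) = 0$, which is the required weak solution of \eqref{1}.

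The main obstacle I would anticipate is not any single estimate but the structural fact that the critical term $-\frac{1}{p^\ast} \dint_\Omega |u|^{p^\ast} dx$ destroys the weak lower semicontinuity of $E$, so the direct method of the calculus of variations is unavailable and one cannot minimize by weak compactness alone. This is precisely why the argument must route the minimization through Theorem \ref{Theorem 1}: the content of the proof is the recognition that \eqref{11} neutralizes the critical growth in the coercivity estimate, reducing the delicate critical minimization to an application of the already-established compactness for bounded \PS{} sequences.
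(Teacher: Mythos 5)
Your proposal is correct and follows essentially the same route as the paper: use \eqref{11} and the Sobolev inequality \eqref{3} to cancel the critical term and get $E(u) \ge a_5 \left(\int_\Omega |\nabla u|^p\, dx\right)^r - a_6 \int_\Omega |u|^q\, dx - a_7$, conclude coercivity and boundedness below since $r > q/p$, deduce the \PS{} condition from Theorem \ref{Theorem 1} via boundedness of \PS{} sequences, and obtain a global minimizer. The only difference is cosmetic: the paper simply asserts that a coercive, bounded-below $C^1$ functional satisfying \PS{} has a global minimizer, while you spell out the standard Ekeland argument behind that assertion.
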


In particular, we have the following existence result for the model problem
\begin{equation} \label{13}
\left\{\begin{aligned}
- \left[1 + b \left(\int_\Omega |\nabla u|^p\, dx\right)^{\gamma - 1}\right] \Delta_p\, u & = f(x,u) + |u|^{p^\ast - 2}\, u && \text{in } \Omega\\[10pt]
u & = 0 && \text{on } \bdry{\Omega}.
\end{aligned}\right.
\end{equation}

\begin{corollary} \label{Corollary 1}
If \eqref{2} holds, then problem \eqref{13} has a solution in each of the following cases:
\begin{enumroman}
\item $\gamma = p^\ast/p$, $b = S_{N,\,p}^{- p^\ast\!/p}$, and $q < p$,
\item $\gamma = p^\ast/p$ and $b > S_{N,\,p}^{- p^\ast\!/p}$,
\item $\gamma > p^\ast/p$ and
    \[
    b^{p^\ast\!/p - 1} > \frac{(\gamma - p^\ast/p)^{\gamma - p^\ast\!/p}\, (p^\ast/p - 1)^{p^\ast\!/p - 1}}{(\gamma - 1)^{\gamma - 1}}\, S_{N,\,p}^{- (p^\ast\!/p)(\gamma - 1)}.
    \]
\end{enumroman}
\end{corollary}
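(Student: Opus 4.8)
The plan is to verify, in each of the three cases, that the hypotheses \eqref{2}, \eqref{4}, and \eqref{11} of Theorem \ref{Theorem 2} hold for the model nonlinearity $h(t) = 1 + bt^{\gamma-1}$, and then to invoke that theorem. Since \eqref{2} is assumed in the corollary and the primitive is $H(t) = t + (b/\gamma)\, t^\gamma$, the whole task reduces to checking the two pointwise inequalities \eqref{4} and \eqref{11}. Writing $\sigma = p^\ast/p > 1$ and $A = S_{N,\,p}^{-\sigma}$, condition \eqref{4} reads $1 + bt^{\gamma-1} > A\, t^{\sigma-1}$ for all $t > 0$, while \eqref{11} reads $t + (b/\gamma)\, t^\gamma \ge (A/\sigma)\, t^\sigma + a_3 t^r - a_4$ for some $a_3, a_4 > 0$ and $r > q/p$. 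I note at the outset that $q < p^\ast$ gives $q/p < \sigma$, which will supply the room needed to choose $r$ in cases (ii) and (iii).

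For the two cases with $\gamma = \sigma$, condition \eqref{4} is immediate: in (i) it reduces to $1 > 0$, and in (ii) it follows at once from $b > A$. For \eqref{11} in case (ii), subtracting $(A/\sigma)\, t^\sigma$ from $H(t)$ leaves $t + \bigl((b-A)/\sigma\bigr)\, t^\sigma$, so taking $r = \sigma > q/p$, $a_3 = (b-A)/\sigma$, and any $a_4 > 0$ suffices. In case (i), where $b = A$, the same subtraction leaves only the linear term $t$, so I must dominate $a_3 t^r$ by $t + a_4$; this forces $r \le 1$, and here the hypothesis $q < p$ is exactly what makes a choice $q/p < r \le 1$ possible (indeed $r = 1$, $a_3 = a_4 = 1$ works). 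This is the sole place the extra assumption $q < p$ in (i) is used.

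The substantive case is (iii), where $\gamma > \sigma$. There \eqref{11} is the easy half: since $\gamma > \sigma$, the term $(b/\gamma)\, t^\gamma$ dominates $(A/\sigma)\, t^\sigma$ at infinity, so choosing $r = \gamma > \sigma > q/p$ and any $a_3 \in (0, b/\gamma)$, the remaining function $t + (b/\gamma - a_3)\, t^\gamma - (A/\sigma)\, t^\sigma$ is continuous on $[0,\infty)$ and tends to $+\infty$, hence is bounded below and can be made nonnegative by taking $a_4$ large. The real work is \eqref{4}. I would set $\phi(t) = bt^{\gamma-1} - A\, t^{\sigma-1}$, note $\phi(0)=0$, $\phi(t)<0$ for small $t>0$, and $\phi(t)\to+\infty$, so that $\phi$ has a unique positive critical point $t_\ast = \bigl[A(\sigma-1)/(b(\gamma-1))\bigr]^{1/(\gamma-\sigma)}$, which is its global minimizer; thus \eqref{4} holds if and only if $1 + \phi(t_\ast) > 0$.

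The main obstacle, and the heart of the corollary, is the algebra converting $1 + \phi(t_\ast) > 0$ into the stated threshold for $b$. Using the critical-point relation $t_\ast^{\gamma-\sigma} = A(\sigma-1)/(b(\gamma-1))$ to simplify gives $\phi(t_\ast) = A\,\tfrac{\sigma-\gamma}{\gamma-1}\, t_\ast^{\sigma-1}$, so the requirement becomes $A\,\tfrac{\gamma-\sigma}{\gamma-1}\, t_\ast^{\sigma-1} < 1$. Substituting $t_\ast^{\sigma-1}$, isolating $b$, and collecting the powers of $A$, $\gamma-1$, $\gamma-\sigma$, and $\sigma-1$ reproduces exactly $b^{\sigma-1} > (\gamma-\sigma)^{\gamma-\sigma}(\sigma-1)^{\sigma-1}(\gamma-1)^{-(\gamma-1)}\, A^{\gamma-1}$, which is precisely the hypothesis of (iii) after restoring $\sigma = p^\ast/p$ and $A = S_{N,\,p}^{-\sigma}$. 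With \eqref{4} and \eqref{11} established in all three cases, Theorem \ref{Theorem 2} delivers a solution of \eqref{13}.
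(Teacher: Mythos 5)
Your proposal is correct and follows exactly the route the paper intends: the paper states Corollary \ref{Corollary 1} without a separate proof, as an immediate consequence of Theorem \ref{Theorem 2} once \eqref{4} and \eqref{11} are verified for $h(t) = 1 + bt^{\gamma-1}$, and the verification of \eqref{4} in cases (ii) and (iii) is precisely the unproved remark following Theorem \ref{Theorem 1}. Your minimization of $\phi(t) = bt^{\gamma-1} - S_{N,\,p}^{-p^\ast\!/p}\, t^{p^\ast\!/p - 1}$, the resulting threshold algebra, and the role of $q < p$ in case (i) (forcing $r \le 1$ so that $r > q/p$ remains possible) all supply correctly the details the paper leaves implicit.
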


Now we assume that
\begin{equation} \label{12}
\lim_{t \to 0}\, \frac{H(t)}{t} = 1
\end{equation}
and
\begin{equation} \label{15}
\lim_{t \to 0}\, \frac{pF(x,t)}{|t|^p} = \lambda \quad \text{uniformly a.e.\! in } \Omega.
\end{equation}
Then problem \eqref{1} has the trivial solution $u \equiv 0$, and we seek nontrivial solutions. The location of $\lambda$ with respect to the spectrum of the $p$-Laplacian will play an important role in our results. We recall that the spectrum $\sigma(- \Delta_p)$ consists of those $\lambda \in \R$ for which the eigenvalue problem
\begin{equation} \label{16}
\left\{\begin{aligned}
- \Delta_p\, u & = \lambda\, |u|^{p-2}\, u && \text{in } \Omega\\[10pt]
u & = 0 && \text{on } \bdry{\Omega}
\end{aligned}\right.
\end{equation}
has a nontrivial solution. The first eigenvalue
\begin{equation} \label{17}
\lambda_1 = \inf \sigma(- \Delta_p)
\end{equation}
is positive, simple, and has an associated eigenfunction $\varphi_1$ that is positive in $\Omega$ (see Anane \cite{MR89e:35124} and Lindqvist \cite{MR90h:35088,MR1139483}). Moreover, $\lambda_1$ is isolated in the spectrum, so the second eigenvalue
\begin{equation} \label{18}
\lambda_2 = \inf \sigma(- \Delta_p) \cap (\lambda_1,\infty)
\end{equation}
is well-defined (see Anane and Tsouli \cite{MR97k:35190}). We have the following theorem.

\begin{theorem} \label{Theorem 3}
Assume \eqref{2}, \eqref{4}, \eqref{11}, \eqref{12}, and \eqref{15} with $\lambda \notin \sigma(- \Delta_p)$.
\begin{enumroman}
\item \label{Theorem 3.i} If $\lambda > \lambda_1$, then problem \eqref{1} has a nontrivial solution.
\item \label{Theorem 3.ii} If $\lambda > \lambda_2$, then problem \eqref{1} has two nontrivial solutions.
\end{enumroman}
\end{theorem}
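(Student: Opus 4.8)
The plan is to combine the coercivity forced by \eqref{11} with the compactness furnished by Theorem~\ref{Theorem 1} to produce a global minimizer (the first, nontrivial, solution), and then to read off the local behaviour of $E$ at the origin from \eqref{12} and \eqref{15} and feed it into Morse theory to obtain the second solution in case~\ref{Theorem 3.ii}. Throughout, $\norm{u}$ denotes the norm $\big(\int_\Omega \abs{\nabla u}^p\, dx\big)^{1/p}$ of $W^{1,\,p}_0(\Omega)$. First I would show that $E$ is coercive. With $s = \int_\Omega \abs{\nabla u}^p\, dx = \norm{u}^p$, the Sobolev inequality gives $\int_\Omega \abs{u}^{p^\ast} dx \le S_{N,\,p}^{-p^\ast\!/p}\, s^{p^\ast\!/p}$, so \eqref{11} makes the two critical-order terms cancel:
\[
\frac{1}{p}\, H(s) - \frac{1}{p^\ast} \int_\Omega \abs{u}^{p^\ast} dx \ge \frac{a_3}{p}\, s^r - \frac{a_4}{p}.
\]
Since \eqref{2} yields $\int_\Omega F(x,u)\, dx \le C\,(\norm{u}^q + \norm{u})$ and $pr > q > 1$, we get $E(u) \ge (a_3/p)\,\norm{u}^{pr} - C\,(\norm{u}^q + \norm{u}) - a_4/p \to +\infty$, so $E$ is coercive and bounded below. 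A minimizing sequence is then bounded with $E'(u_j) \to 0$, and Theorem~\ref{Theorem 1} supplies a convergent subsequence, hence a global minimizer $u_0$. Testing $E$ along the first eigenfunction and using \eqref{12}, \eqref{15} and $\int_\Omega \abs{\nabla \varphi_1}^p\, dx = \lambda_1 \int_\Omega \abs{\varphi_1}^p\, dx$ gives
\[
E(t\varphi_1) = \frac{t^p}{p}\,(\lambda_1 - \lambda) \int_\Omega \abs{\varphi_1}^p\, dx + \o(t^p),
\]
which is negative for small $t > 0$ since $\lambda > \lambda_1$. Thus $\inf E < 0 = E(0)$, so $u_0 \ne 0$, proving part~\ref{Theorem 3.i}.

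For part~\ref{Theorem 3.ii} I would retain this minimizer, which satisfies $E(u_0) < 0$ and, being a global minimum, has critical groups $C^q(E,u_0) = \delta_{q0}\, \mathbb{F}$ (coefficients in a field $\mathbb{F}$), and extract a second nontrivial critical point from the behaviour of $E$ at $0$. By \eqref{12} and \eqref{15},
\[
E(u) = \frac{1}{p}\bigg(\int_\Omega \abs{\nabla u}^p dx - \lambda \int_\Omega \abs{u}^p dx\bigg) + \o(\norm{u}^p) \qquad \text{as } u \to 0,
\]
so near the origin the sign of $E$ is controlled by the $p$-homogeneous form on the right. Because $\lambda \notin \sigma(-\Delta_p)$, the symmetric cone $\set{u : \int_\Omega \abs{\nabla u}^p dx < \lambda \int_\Omega \abs{u}^p dx}$ and its complement provide a cohomological local splitting of $E$ at $0$, with $E \le 0$ on the first cone and $E > 0$ on the second near $0$. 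The Fadell--Rabinowitz index of this negative cone equals the number of eigenvalues lying below $\lambda$, which is at least $2$ exactly because $\lambda > \lambda_2$, the second eigenvalue \eqref{18} agreeing with the second index eigenvalue. Hence $C^d(E,0) \ne 0$ for some $d \ge 2$, provided $0$ is isolated among the critical points; if it is not, \eqref{1} already has infinitely many nontrivial solutions and there is nothing more to prove.

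The conclusion then follows from Morse theory. Since $E$ is coercive, bounded below and satisfies the \PS{} condition, its lower level sets are empty below $\inf E$ and contractible above all critical values. Hence if $0$ and $u_0$ were the only critical points (and $u_0$ the unique minimizer, for otherwise there are already two nontrivial solutions), then for small $\eps > 0$ both $\set{E \le -\eps}$ and $\set{E \le \eps}$ would be contractible, and the exact sequence of the pair would give $C^\ast(E,0) = 0$, contradicting $C^d(E,0) \ne 0$. Therefore a third critical point $u_1$ exists; being automatically distinct from $0$ and $u_0$, it is the second nontrivial solution.

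I expect the middle step to be the main obstacle: verifying that \eqref{12} and \eqref{15} really do yield a cohomological local splitting at $0$, and, above all, computing the Fadell--Rabinowitz index of the negative cone in terms of $\lambda_2$. Here the nonlinearity $p \ne 2$ is what makes the analysis delicate, since the index need not be concentrated in a single degree and one must invoke the minimax characterization of the index eigenvalues, together with $\lambda \notin \sigma(-\Delta_p)$, to place the nontrivial critical group of $E$ at $0$ in a positive degree; this is precisely what drives the Morse argument that manufactures the extra critical point. The isolation of $0$ (and of $u_0$), handled by the dichotomies noted above, is a secondary technical point.
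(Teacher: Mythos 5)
Your coercivity argument, the construction of the global minimizer $u_0$, and part \ref{Theorem 3.i} (testing $E$ along $\varphi_1$) are correct and essentially the paper's (the paper gets $\inf E<0$ by testing along functions in $\Psi^{\lambda_k}$ rather than along $\varphi_1$). The genuine gap is precisely the step you defer: establishing the cohomological local splitting, i.e., the nontrivial critical group at $0$. This is the heart of the proof, not a technical middle step, and two things go wrong with your construction. First, the sign conditions fail: writing $E(tu)=\frac{t^p}{p}\left(1-\frac{\lambda}{\Psi(u)}\right)+\o(t^p)$ for $u\in S$, the remainder is $\o(t^p)$ uniformly in $u$ (this is what \eqref{12} and \eqref{23} give), but on your cone $\set{u: \int_\Omega \abs{\nabla u}^p\,dx<\lambda\int_\Omega \abs{u}^p\,dx}$ the coefficient $1-\lambda/\Psi(u)$ is arbitrarily close to $0$, so for no radius $\rho>0$ can you conclude $E\le0$ on the truncated cone: for example, with $f(x,t)=\lambda|t|^{p-2}t-\mu|t|^{q-2}t$, $p<q<p^\ast$, $\mu>0$, points of the cone with $\Psi(u)$ near $\lambda$ have $E>0$ at all small scales. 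One must instead use sets on which $\Psi$ stays uniformly away from $\lambda$, and this is exactly where $\lambda\notin\sigma(-\Delta_p)$ and the spectral gap enter. Second, the index computation is absent, and for $p\ne2$ it cannot be done by counting ``eigenvalues below $\lambda$ with multiplicity''--- there is no such linear theory. Moreover, Proposition \ref{Proposition 2} requires the exact equality $i(A_0)=i(S\setminus B_0)=k$; a lower bound $i(A_0)\ge2$ pins down nothing. The paper fixes both problems simultaneously: using the minimax eigenvalues \eqref{21} it chooses $k$ with $\lambda_k<\lambda<\lambda_{k+1}$ (so $k\ge2$ when $\lambda>\lambda_2$), takes $A_0=\Psi^{\lambda_k}$ and $B_0=\Psi_{\lambda_{k+1}}$, on which $1-\lambda/\Psi(u)$ has a definite sign bounded away from zero, and invokes the identity \eqref{19} (Perera--Agarwal--O'Regan, Theorem 4.6) to get the exact index equality. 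Without \eqref{19} or an equivalent substitute, your middle step cannot be completed.

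Your Morse-theoretic endgame, by contrast, is a correct alternative to the paper's use of Proposition \ref{Proposition 1}: if $0$ and a unique minimizer $u_0$ were the only critical points, then (with \PS{} supplied by coercivity and Theorem \ref{Theorem 1}) the second deformation lemma makes $E^{-\eps}$ deformation retract onto $\set{u_0}$ and makes $E^{\eps}$ a deformation retract of the contractible space $W^{1,\,p}_0(\Omega)$, so $C^\ast(E,0)\cong H^\ast(E^{\eps},E^{-\eps})=0$, contradicting any nonvanishing critical group at $0$. This three-critical-points mechanism needs no bookkeeping of degrees --- granted the critical group at $0$, it would in fact yield two nontrivial solutions already for $\lambda>\lambda_1$ --- whereas the paper's Proposition \ref{Proposition 1} produces $u_1$ with $C^{k\mp1}(E,u_1)\ne0$ and needs $k\ge2$ (i.e., $\lambda>\lambda_2$) together with \eqref{25} to conclude $u_1\ne u_0$. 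But this advantage is moot so long as the crux, $C^k(E,0)\ne0$, remains unproved.
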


In particular, we have the following corollary for the model problem \eqref{13}.

\begin{corollary}
Assume \eqref{2} and \eqref{15} with $\lambda \notin \sigma(- \Delta_p)$.
\begin{enumroman}
\item If $\lambda > \lambda_1$, then problem \eqref{13} has a nontrivial solution in each of the three cases in Corollary \ref{Corollary 1}.
\item If $\lambda > \lambda_2$, then problem \eqref{13} has two nontrivial solutions in each of the three cases in Corollary \ref{Corollary 1}.
\end{enumroman}
\end{corollary}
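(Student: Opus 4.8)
The plan is to obtain this corollary as a direct application of Theorem \ref{Theorem 3} to the model problem \eqref{13}, by checking that all the hypotheses of that theorem hold in each of the three cases. For the model nonlinearity $h(t) = 1 + b\, t^{\gamma - 1}$ one has
\[
H(t) = t + \frac{b}{\gamma}\, t^\gamma, \qquad t \ge 0.
\]
Since \eqref{2} and \eqref{15} with $\lambda \notin \sigma(- \Delta_p)$ are assumed outright, it remains only to verify \eqref{4}, \eqref{11}, and \eqref{12}. The key observation is that \eqref{4} and \eqref{11} are exactly the conditions one verifies in deducing Corollary \ref{Corollary 1} from Theorem \ref{Theorem 2}, and the three cases here are identical to those in Corollary \ref{Corollary 1}; hence these two conditions already hold in all three cases, and the only new thing to establish is the normalization \eqref{12}.

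Next I would dispatch \eqref{12}, which is immediate: from the formula for $H$ above,
\[
\frac{H(t)}{t} = 1 + \frac{b}{\gamma}\, t^{\gamma - 1} \longrightarrow 1 \quad \text{as } t \to 0,
\]
because $\gamma > 1$ in every case, with no restriction on $b$.

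For completeness I would recall why \eqref{4} and \eqref{11} hold in each case. In cases (i) and (ii), where $\gamma = p^\ast/p$, condition \eqref{4} reduces to $1 + (b - S_{N,\,p}^{- p^\ast\!/p})\, t^{p^\ast\!/p - 1} > 0$, which holds since $b \ge S_{N,\,p}^{- p^\ast\!/p}$, and \eqref{11} follows by comparing the coefficients of $t^{p^\ast\!/p}$ in $H(t) = t + (pb/p^\ast)\, t^{p^\ast\!/p}$. In the borderline case (i) the choice $b = S_{N,\,p}^{- p^\ast\!/p}$ cancels the critical powers exactly, leaving only the linear term of $H$ to absorb $a_3\, t^r$, so one must pick $r$ with $q/p < r \le 1$; this is possible precisely because the extra assumption $q < p$ gives $q/p < 1$. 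In case (iii), where $\gamma > p^\ast/p > q/p$, the superlinear term $(b/\gamma)\, t^\gamma$ dominates and \eqref{11} holds with $r = p^\ast/p$, while \eqref{4} is the content of the $b$-inequality recorded after Theorem \ref{Theorem 1}.

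With \eqref{2}, \eqref{4}, \eqref{11}, \eqref{12}, and \eqref{15} all in hand and $\lambda \notin \sigma(- \Delta_p)$, Theorem \ref{Theorem 3} applies directly: part \ref{Theorem 3.i} produces one nontrivial solution when $\lambda > \lambda_1$ and part \ref{Theorem 3.ii} produces two when $\lambda > \lambda_2$, in each of the three cases. I expect the only real subtlety to be the verification of \eqref{11} in case (i): the exact cancellation of the critical powers narrows the admissible exponents to $r \in (q/p, 1]$, which is exactly why $q < p$ must be imposed there though not in cases (ii) and (iii).
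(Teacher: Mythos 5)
Your proposal is correct and takes exactly the approach the paper intends: the corollary is stated as an immediate consequence of Theorem \ref{Theorem 3}, the whole content being the verification of \eqref{4}, \eqref{11}, and \eqref{12} for $H(t) = t + (b/\gamma)\,t^\gamma$ in the three cases of Corollary \ref{Corollary 1}, which you carry out correctly, including the key point that the exact cancellation of the critical terms in case $(i)$ forces $r \le 1$ and hence requires $q < p$.
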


The proof of Theorem \ref{Theorem 3} uses tools from Morse theory, in particular, the notion of a cohomological local splitting (see Degiovanni et al.\! \cite{MR2661274}, Perera et al.\! \cite{MR2640827}, and Perera \cite{MR1700283}) and eigenvalues based on the cohomological index (see Perera \cite{MR1998432} and Perera and Szulkin \cite{MR2153141}). We will recall these tools in the next section.

\section{Preliminaries}

Let $E$ be a $C^1$-functional defined on a Banach space $W$. For $a \in \R$, we denote by $E^a$ the sublevel set $\set{u \in W : E(u) \le a}$, and we denote by $H$ the Alexander-Spanier cohomology with $\Z_2$ coefficients (see Spanier \cite{MR1325242}). In Morse theory, the local behavior of $E$ near a critical point $u_0$ is described by the sequence of critical groups
\[
C^q(E,u_0) = H^q(E^c,E^c \setminus \set{u_0}), \quad q \ge 0,
\]
where $c = E(u_0)$ is the corresponding critical value (see Chang \cite{MR1196690}, Mawhin and Willem \cite{MR982267}, and Perera et al.\! \cite{MR2640827}).

Recall that $E$ satisfies the Palais-Smale compactness condition at the level $c \in \R$, or the \PS{c} condition for short, if every sequence $\seq{u_j} \subset W$ such that $E(u_j) \to c$ and $E'(u_j) \to 0$ has a convergent subsequence. The proof of Theorem \ref{Theorem 3} will make use of the following alternative proved in Perera \cite{MR1749421} (see also Perera et al.\! \cite[Proposition 3.28({\em ii})]{MR2640827}).

\begin{proposition} \label{Proposition 1}
Assume that zero is a critical point of $E$ with $E(0) = 0$ and $C^k(E,0) \ne 0$ for some $k \ge 1$, and that there are regular values $- \infty < a < 0 < b \le + \infty$ of $E$ with $H^k(E^b,E^a) = 0$ such that $E$ has only a finite number of critical points in $E^{-1}([a,b])$ and $E$ satisfies the {\em \PS{c}} condition for all $c \in [a,b] \cap \R$. Then $E$ has a nontrivial critical point $u_1$ with either
\[
a < E(u_1) < 0 \hquad \text{and} \hquad C^{k-1}(E,u_1) \ne 0,
\]
or
\[
0 < E(u_1) < b \hquad \text{and} \hquad C^{k+1}(E,u_1) \ne 0.
\]
\end{proposition}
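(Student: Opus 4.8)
The plan is to interpose two regular levels $\pm \eps$ between $a$ and $b$ and play the long exact sequences of the resulting triples against the hypothesis $H^k(E^b,E^a) = 0$. First I would fix $\eps > 0$ small enough that $0$ is the only critical value of $E$ in $[-\eps,\eps]$ and that $\pm\eps$ are themselves regular values; this is possible because $E$ has only finitely many critical points, hence finitely many critical values, in $E^{-1}([a,b])$, and $a < 0 < b$. Every critical point in $E^{-1}([-\eps,\eps])$ then lies on the level $0$ and is isolated, so the standard excision argument gives $H^q(E^\eps,E^{-\eps}) \cong \bigoplus_j C^q(E,z_j)$, the sum running over the critical points $z_j$ at level $0$. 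Since $0$ is one of these and $C^k(E,0) \ne 0$, this yields the key nonvanishing $H^k(E^\eps,E^{-\eps}) \ne 0$.

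Next I would write down the long exact cohomology sequences of the triples $(E^\eps,E^{-\eps},E^a)$ and $(E^b,E^\eps,E^a)$ and argue by contradiction, assuming $H^{k-1}(E^{-\eps},E^a) = 0$ and $H^{k+1}(E^b,E^\eps) = 0$ simultaneously. From the first sequence, exactness at $H^k(E^\eps,E^{-\eps})$ shows that $H^{k-1}(E^{-\eps},E^a) = 0$ makes the map $H^k(E^\eps,E^{-\eps}) \to H^k(E^\eps,E^a)$ injective. From the second sequence, the hypothesis $H^k(E^b,E^a) = 0$ forces the connecting homomorphism $H^k(E^\eps,E^a) \to H^{k+1}(E^b,E^\eps)$ to be injective, so $H^{k+1}(E^b,E^\eps) = 0$ gives $H^k(E^\eps,E^a) = 0$. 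Composing the two injections yields $H^k(E^\eps,E^{-\eps}) = 0$, contradicting the previous paragraph. Hence at least one of $H^{k-1}(E^{-\eps},E^a) \ne 0$ or $H^{k+1}(E^b,E^\eps) \ne 0$ holds.

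Finally I would convert nonvanishing relative cohomology into a critical point via the Morse inequalities (equivalently, the subadditivity of critical groups over regular sublevels), which apply since $E$ satisfies \PS{c} on $[a,b]$ and has only finitely many critical points there. If $H^{k-1}(E^{-\eps},E^a) \ne 0$, some enclosed critical point $u_1$ with $a < E(u_1) < -\eps$ has $C^{k-1}(E,u_1) \ne 0$; if instead $H^{k+1}(E^b,E^\eps) \ne 0$, some enclosed $u_1$ with $\eps < E(u_1) < b$ has $C^{k+1}(E,u_1) \ne 0$. In either case $E(u_1) \ne 0 = E(0)$, so $u_1$ is nontrivial, and the two alternatives are exactly those in the statement, the hypothesis $k \ge 1$ ensuring $C^{k-1}$ is meaningful. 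The routine points are the choice of $\eps$ and the excision identification of $H^*(E^\eps,E^{-\eps})$; the one place demanding care is the case $b = +\infty$, where $E^b = W$ and one must ensure the deformation lemma and Morse inequalities still apply above the highest critical level — this is where \PS{c} for all $c \in [a,b] \cap \R$ together with finiteness of the critical set is used. The conceptual crux, however, is the purely homological deduction of the second paragraph, which localizes the extra topology measured by $C^k(E,0)$ either strictly below or strictly above the level of the origin while shifting its degree by $\mp 1$ through the connecting homomorphisms.
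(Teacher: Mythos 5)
Your proof is correct, and it is essentially the standard argument: the paper itself does not prove Proposition \ref{Proposition 1} but imports it from Perera \cite{MR1749421} (see also Perera et al.\ \cite[Proposition 3.28({\em ii})]{MR2640827}), and the proof given there proceeds exactly as yours does --- isolate the level $0$ between regular values $\pm\eps$, identify $H^k(E^\eps,E^{-\eps})$ with the direct sum of critical groups at level zero so that $C^k(E,0) \ne 0$ forces it to be nontrivial, then use the exact sequences of the triples $(E^\eps,E^{-\eps},E^a)$ and $(E^b,E^\eps,E^a)$ together with $H^k(E^b,E^a)=0$ to conclude that $H^{k-1}(E^{-\eps},E^a)$ or $H^{k+1}(E^b,E^\eps)$ is nontrivial, and finish with the Morse-theoretic counting over the regular intervals $[a,-\eps]$ and $[\eps,b]$. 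Your handling of the two delicate points (the choice of $\eps$ via finiteness of the critical set, and the case $b=+\infty$ via the \PS{c} condition at all levels) is also the right one, so there is nothing to add.
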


To obtain a nontrivial critical group at zero in the absence of a suitable direct sum decomposition, we will use a cohomological local splitting. For a symmetric set $A \subset W \setminus \set{0}$, let $\overline{A} = A/\Z_2$ be the quotient space of $A$ with each $u$ and $-u$ identified, let $f : \overline{A} \to \RP^\infty$ be the classifying map of $\overline{A}$, and let $f^\ast : H^\ast(\RP^\infty) \to H^\ast(\overline{A})$ be the induced homomorphism of the cohomology rings. The $\Z_2$-cohomological index of $A$ is defined by
\[
i(A) = \begin{cases}
0 & \text{if } A = \emptyset\\[7.5pt]
\sup \set{m \ge 1 : f^\ast(\omega^{m-1}) \ne 0} & \text{if } A \ne \emptyset,
\end{cases}
\]
where $\omega \in H^1(\RP^\infty)$ is the generator of the polynomial ring $H^\ast(\RP^\infty) = \Z_2[\omega]$ (see Fadell and Rabinowitz \cite{MR0478189}).

\begin{definition}
We say that $E$ has a cohomological local splitting near zero in dimension $0 \le k < \infty$ if there are disjoint nonempty closed symmetric subsets $A_0$ and $B_0$ of the unit sphere $S = \set{u \in W : \norm{u} = 1}$ with
\begin{equation} \label{22}
i(A_0) = i(S \setminus B_0) = k
\end{equation}
and $\rho > 0$ such that, setting $A = \set{tu : u \in A_0,\, 0 \le t \le \rho}$ and $B = \set{tu : u \in B_0,\, 0 \le t \le \rho}$, we have
\begin{equation} \label{24}
\sup_A\, E \le 0 \le \inf_B\, E.
\end{equation}
\end{definition}

This definition was given, in an equivalent form, in Degiovanni et al.\! \cite{MR2661274} and is a slight variant of Perera et al.\! \cite[Definition 3.33]{MR2640827}, which in turn is a variant of the homological local linking of Perera \cite{MR1700283}. The following proposition was proved in Degiovanni et al.\! \cite{MR2661274} (see also Perera et al.\! \cite[Proposition 3.34]{MR2640827} and Perera \cite{MR1700283}).

\begin{proposition} \label{Proposition 2}
If zero is an isolated critical point of $E$ and $E$ has a cohomological local splitting near zero in dimension $k$, then $C^k(E,0) \ne 0$.
\end{proposition}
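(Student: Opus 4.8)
The plan is to manufacture a nonzero class in $C^k(E,0) = H^k(E^0, E^0 \setminus \set{0})$ by feeding the equivariant information carried by the index \eqref{22} through an ordinary (non-equivariant) deformation governed by $E$. First I would localize. Since zero is an isolated critical point with $E(0) = 0$, there is $\delta \in (0,\rho]$ so that zero is the only critical point of $E$ in the closed ball $B_\delta = \set{u : \norm{u} \le \delta}$. By excision, $C^k(E,0) = H^k(E^0 \cap B_\delta,\, (E^0 \cap B_\delta) \setminus \set{0})$. Using a negative pseudo-gradient flow for $E$ on $B_\delta$ (which has no rest points away from zero) together with the second deformation lemma, I would deformation retract $(E^0 \cap B_\delta)\setminus\set{0}$ onto the sphere trace $X := E^0 \cap S_\delta$, where $S_\delta = \set{u : \norm{u} = \delta}$, and $E^0 \cap B_\delta$ onto the cone over $X$ with vertex zero. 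This reduces the problem to showing $\widetilde H^{k-1}(X) \ne 0$ (the case $k = 0$, where $A_0 = \emptyset$ and zero is a local minimum, being immediate and handled separately).

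The bridge from the index to ordinary cohomology is the transfer (Gysin) sequence of the antipodal double cover $\pi : Y \to \overline{Y}$ of a closed symmetric set $Y$, classified by $f : \overline{Y} \to \RP^\infty$. Writing $w = f^\ast(\omega) \in H^1(\overline Y)$, the definition of the index shows that $i(Y) = k$ is equivalent to $w^{k-1} \ne 0$ and $w^k = 0$. Exactness of the fragment $\cdots \to H^{k-1}(Y) \xrightarrow{\,\tau\,} H^{k-1}(\overline Y) \xrightarrow{\,\smile\, w\,} H^{k}(\overline Y) \to \cdots$ at the middle term gives $\operatorname{im} \tau = \ker(\smile w)$; since $w \smile w^{k-1} = w^k = 0$ we have $w^{k-1} \in \ker(\smile w) = \operatorname{im}\tau$, and $w^{k-1} \ne 0$ then forces $H^{k-1}(Y) \ne 0$. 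I would apply this to the two ends supplied by \eqref{22} and \eqref{24}: because $\sup_A E \le 0$ the scaled slice $\delta A_0$ lies in $X$, while because $\inf_B E \ge 0$ the strict trace $\set{E < 0} \cap S_\delta$ avoids $\delta B_0$ and so sits inside $S_\delta \setminus \delta B_0$. Trapping $X$ between symmetric sets $P \subseteq X \subseteq Q$ with $i(P) = i(Q) = k$ (so $P = \delta A_0$ on the inside, $Q$ an index-$k$ symmetric set around $S_\delta \setminus \delta B_0$ on the outside), naturality of the transfer and the fact that $w_Q$ restricts to $w_P$ with $w_P^{k-1} \ne 0$ show that the restriction $\widetilde H^{k-1}(Q) \to \widetilde H^{k-1}(P)$ is nonzero; since this factors through $\widetilde H^{k-1}(X)$, the latter is nonzero and $C^k(E,0) \ne 0$ follows from the first step.

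The main obstacle is precisely the construction of the outer symmetric trap $Q \supseteq X$ of index $k$. The functional $E$ is \emph{not} even, so neither $X$ nor any sublevel set of $E$ carries the $\Z_2$-action on which the index is defined; the index is available only on the model sets $A_0$, $B_0$, and $S$. One therefore cannot read the cohomology of the non-symmetric $X$ off a naive sandwich, and the level set $\set{E = 0}$ aggravates this: passing to the \emph{strict} sublevel $\set{E < 0}$ recovers the clean containment in $S_\delta \setminus \delta B_0$ but destroys the inner containment $\delta A_0 \subseteq X$ (where only $E \le 0$ holds), whereas keeping the closed sublevel $\set{E \le 0}$ lets $X$ meet $\delta B_0$ along $\set{E = 0}$. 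I would resolve this using the continuity property of the cohomological index — replacing $S \setminus B_0$ by a closed symmetric neighborhood of the same index $k$ — together with a careful choice of $\delta$ and of a regular value $\eps > 0$ (so that the trace $\set{E \le \eps} \cap S_\delta$ can be compared, after the $\delta$-scaling and a preliminary equivariant deformation separating $\set{E = 0}$ from the interior of $\delta B_0$, to such a neighborhood). Reconciling the symmetric linking data of $A_0$ and $B_0$ with the asymmetric flow of $E$ is where the genuine work lies; the role of the hypothesis $i(S \setminus B_0) = k$ is exactly to furnish the outer trap that prevents the degree-$(k-1)$ class produced by $A_0$ from being annihilated.
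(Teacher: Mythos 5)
First, a point of reference: the paper itself does not prove Proposition \ref{Proposition 2}; it quotes it from Degiovanni et al.\ \cite{MR2661274} (see also \cite[Proposition 3.34]{MR2640827}), so your attempt can only be measured against those published arguments. Your ``bridge'' is exactly the right one --- the transfer (Gysin) sequence of the double cover, its naturality, and the consequence that for symmetric sets $P \subseteq Q$ with $i(P) = i(Q) = k$ the restriction $\widetilde H^{k-1}(Q) \to \widetilde H^{k-1}(P)$ is nonzero --- and this is indeed the mechanism such proofs run on. But your proof has two genuine gaps. The first is your opening reduction of $C^k(E,0)$ to $\widetilde H^{k-1}(X)$, $X = E^0 \cap S_\delta$, by deformation retracting $(E^0 \cap B_\delta)\setminus\set{0}$ onto $X$ and $E^0\cap B_\delta$ onto the cone over $X$. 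This step is unjustified and is false for general $C^1$ functionals: the second deformation lemma requires a Palais--Smale condition and flow-invariant sets, while the ball $B_\delta$ is not invariant under any negative pseudo-gradient flow; such a flow strictly decreases $E$, hence cannot fix $X$ pointwise (which any deformation retraction onto $X$ must do); trajectories need not reach $S_\delta$, let alone hit it at continuously varying times; and, more fundamentally, sublevel sets of a $C^1$ (even $C^\infty$) function near an isolated critical point need not have a local cone structure --- that is an analytic/semialgebraic phenomenon, and its failure is precisely why critical groups are defined through the relative cohomology of $(E^0, E^0\setminus\set{0})$ rather than through a sphere trace. This gap is, however, avoidable: your transfer argument can be aimed directly at the inclusions of pairs $(A', A'\setminus\set{0}) \hookrightarrow (E^0\cap B_\delta, (E^0\cap B_\delta)\setminus\set{0}) \hookrightarrow (B_\delta, B_\delta \setminus B')$, where $A' = \set{tu : u \in A_0,\, 0 \le t \le \delta}$ and $B' = \set{tu : u \in B_0,\, 0 \le t \le \delta}$, using the coboundary isomorphisms $H^k(A', A'\setminus\set{0}) \cong \widetilde H^{k-1}(A_0)$ and $H^k(B_\delta, B_\delta\setminus B') \cong \widetilde H^{k-1}(S\setminus B_0)$ (both $A'$ and $B_\delta$ are contractible, and $B_\delta\setminus B'$ retracts radially onto $\delta(S\setminus B_0)$); no retraction onto a level or sphere trace is ever needed, and this is the shape of the cited proofs.

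The second gap is the essential one: the outer symmetric ``trap'' is never produced, and you say so yourself (``where the genuine work lies''), but flagging the difficulty is not the same as resolving it. With the non-strict inequality $\inf_B E \ge 0$ of \eqref{24}, the set $(E^0\cap B_\delta)\setminus\set{0}$ (or your $X$) may meet $B'\setminus\set{0}$ along $\set{E = 0}$, so the required inclusion into $B_\delta\setminus B'$ (or $S_\delta\setminus\delta B_0$) simply does not exist, and your proposed remedies do not obviously make sense: a ``preliminary equivariant deformation'' cannot be driven by $E$ (as you note, $E$ is not even, so no $E$-controlled flow is $\Z_2$-equivariant), and the continuity property of the index applies to the symmetric model sets, not to the non-symmetric sets $\set{E\le 0}$ and $\set{E=0}$ that create the problem. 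Note that if the hypothesis is strengthened to $E > 0$ on $B\setminus\set{0}$ --- which is what the paper actually verifies in its application, since there $E(tu) \ge \frac{t^p}{p}\bigl(1 - \frac{\lambda+\eps}{\lambda_{k+1}}\bigr) + \o(t^p) > 0$ for small $t > 0$ --- then the outer inclusion holds and your ingredients, rerouted through pairs as above, do close up into a complete proof. So what your proposal proves is a strict-inequality variant; the proposition as stated, with equality permitted in \eqref{24}, is reduced to an unproved and nontrivial claim, which is exactly the delicate point the proofs in \cite{MR2661274,MR2640827} are designed to handle.
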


To show that the functional $E$ in \eqref{20} has a cohomological local splitting near zero when $\lambda \notin \sigma(- \Delta_p)$, we will make use of a sequence of eigenvalues based on the cohomological index that was first introduced in Perera \cite{MR1998432} (see also Perera and Szulkin \cite{MR2153141}). Recall that eigenvalues of problem \eqref{16} coincide with critical values of the functional
\[
\Psi(u) = \frac{1}{\dint_\Omega |u|^p\, dx}, \quad u \in S = \set{u \in W^{1,\,p}_0(\Omega) : \int_\Omega |\nabla u|^p\, dx = 1}.
\]
Denote by $\F$ the class of symmetric subsets of $S$, let
\[
\F_k = \set{M \in \F : i(M) \ge k},
\]
and set
\begin{equation} \label{21}
\lambda_k = \inf_{M \in \F_k}\, \sup_{u \in M}\, \Psi(u), \quad k \ge 1.
\end{equation}
Then $\lambda_1$ and $\lambda_2$ agree with \eqref{17} and \eqref{18}, respectively, and $\seq{\lambda_k}$ is a nondecreasing and unbounded sequence of eigenvalues. Moreover, denoting by
\[
\Psi^a = \set{u \in S : \Psi(u) \le a}, \qquad \Psi_a = \set{u \in S : \Psi(u) \ge a}
\]
the sublevel and superlevel sets of $\Psi$, respectively, we have
\begin{equation} \label{19}
\lambda_k < \lambda_{k+1} \implies i(\Psi^{\lambda_k}) = i(S \setminus \Psi_{\lambda_{k+1}}) = k
\end{equation}
(see Perera et al.\! \cite[Theorem 4.6]{MR2640827}). In the next section, we will make use of \eqref{19} to show that if $\lambda_k < \lambda < \lambda_{k+1}$, then $E$ has a cohomological local splitting near zero in dimension $k$.

\section{Proofs}

In this section we prove Theorem \ref{Theorem 1}, Theorem \ref{Theorem 2}, and Theorem \ref{Theorem 3}.

\begin{proof}[Proof of Theorem \ref{Theorem 1}]
Since $\seq{u_j}$ is bounded, for a renamed subsequence,
\begin{equation} \label{5}
u_j \wto u, \qquad \norm{u_j - u}^p \to t
\end{equation}
for some $u \in W^{1,\,p}_0(\Omega)$ and $t \ge 0$. Since $E'(u_j) \to 0$,
\begin{multline} \label{6}
\hspace{-1.2pt} h\bigg(\int_\Omega |\nabla u_j|^p\, dx\bigg) \int_\Omega |\nabla u_j|^{p-2}\, \nabla u_j \cdot \nabla v\, dx - \int_\Omega f(x,u_j)\, v\, dx - \int_\Omega |u_j|^{p^\ast - 2}\, u_j\, v\, dx = \o(\norm{v})\\[10pt]
\forall v \in W^{1,\,p}_0(\Omega).
\end{multline}
By the Br{\'e}zis-Lieb lemma (see \cite{MR699419}) and \eqref{5},
\[
\int_\Omega |\nabla u_j|^p\, dx \to \int_\Omega |\nabla u|^p\, dx + t =: s.
\]
For a further subsequence, $u_j \to u$ strongly in $L^q(\Omega)$ and a.e.\! in $\Omega$. So taking $v = u_j$ in \eqref{6} gives
\begin{equation} \label{7}
h(s) \left(\int_\Omega |\nabla u|^p\, dx + t\right) - \int_\Omega uf(x,u)\, dx - \int_\Omega |u_j|^{p^\ast} dx = \o(1),
\end{equation}
while taking $v = u$ and passing to the limit gives
\begin{equation} \label{8}
h(s) \int_\Omega |\nabla u|^p\, dx - \int_\Omega uf(x,u)\, dx - \int_\Omega |u|^{p^\ast} dx = 0.
\end{equation}
Since
\[
\int_\Omega |u_j|^{p^\ast} dx - \int_\Omega |u|^{p^\ast} dx = \int_\Omega |u_j - u|^{p^\ast} dx + \o(1)
\]
by the Br{\'e}zis-Lieb lemma, subtracting \eqref{8} from \eqref{7} and using \eqref{3} gives
\[
th(s) = \int_\Omega |u_j - u|^{p^\ast} dx + \o(1) \le S_{N,\,p}^{- p^\ast\!/p} \left(\int_\Omega |\nabla (u_j - u)|^p\, dx\right)^{p^\ast\!/p} + \o(1).
\]
Noting that $h(s) \ge h(t)$ since $s \ge t$ and $h$ is nondecreasing, and passing to the limit gives $th(t) \le S_{N,\,p}^{- p^\ast\!/p}\, t^{p^\ast\!/p}$, which together with \eqref{4} implies that $t = 0$. So $u_j \to u$.
\end{proof}

\begin{proof}[Proof of Theorem \ref{Theorem 2}]
The inequality \eqref{11} together with \eqref{2} and \eqref{3} gives
\[
E(u) \ge a_5 \left(\int_\Omega |\nabla u|^p\, dx\right)^r - a_6 \int_\Omega |u|^q\, dx - a_7
\]
for some constants $a_5, a_6, a_7 > 0$. Since $q < p^\ast$ and $r > q/p$, this together with the Sobolev embedding implies that $E$ is bounded from below and coercive. Coercivity implies that every \PS{} sequence is bounded, so $E$ satisfies the \PS{} condition by Theorem \ref{Theorem 1}. So $E$ has a global minimizer.
\end{proof}

\begin{proof}[Proof of Theorem \ref{Theorem 3}]
As in the proof of Theorem \ref{Theorem 2}, $E$ is bounded from below and has a global minimizer $u_0$. We may assume without loss of generality that $E$ has only a finite number of critical points and hence all critical points of $E$ are isolated. Then the critical groups of $E$ at $u_0$ are given by
\begin{equation} \label{25}
C^q(E,u_0) = \begin{cases}
\Z_2 & \text{if } q = 0\\[7.5pt]
0 & \text{otherwise}.
\end{cases}
\end{equation}

Next we show that $E$ has a cohomological local splitting near zero when $\lambda > \lambda_1$. Let $\seq{\lambda_k}$ be the sequence of eigenvalues in \eqref{21}. Since $\lambda \notin \sigma(- \Delta_p)$, we have $\lambda_k < \lambda < \lambda_{k+1}$ for some $k \ge 1$. By \eqref{19},
\[
i(\Psi^{\lambda_k}) = i(S \setminus \Psi_{\lambda_{k+1}}) = k.
\]
Let
\[
A_0 = \Psi^{\lambda_k}, \qquad B_0 = \Psi_{\lambda_{k+1}}.
\]
Then $A_0$ and $B_0$ are disjoint nonempty closed symmetric subsets of $S$ satisfying \eqref{22}. Fix $\eps > 0$ so small that $\lambda - \eps > \lambda_k$ and $\lambda + \eps < \lambda_{k+1}$. By \eqref{15} and \eqref{2}, $\exists\, a_\eps > 0$ such that
\begin{equation} \label{23}
\abs{F(x,t) - \frac{\lambda}{p}\, |t|^p} \le \frac{\eps}{p}\, |t|^p + a_\eps |t|^{p^\ast} \quad \text{for a.a.\! } x \in \Omega \text{ and all } t \in \R.
\end{equation}
For $u \in S$ and $t \ge 0$,
\[
H\bigg(\int_\Omega |\nabla (tu)|^p\, dx\bigg) = H(t^p) = t^p + \o(t^p) \text{ as } t \to 0
\]
by \eqref{12}, and
\[
\abs{\int_\Omega F(x,tu)\, dx - \frac{\lambda t^p}{p} \int_\Omega |u|^p\, dx} \le \frac{\eps t^p}{p} \int_\Omega |u|^p\, dx + \o(t^p) \text{ as } t \to 0
\]
by \eqref{23} and the Sobolev embedding, so
\[
\frac{t^p}{p} \left(1 - \frac{\lambda + \eps}{\Psi(u)}\right) + \o(t^p) \le E(tu) \le \frac{t^p}{p} \left(1 - \frac{\lambda - \eps}{\Psi(u)}\right) + \o(t^p) \text{ as } t \to 0.
\]
In particular,
\begin{equation} \label{26}
E(tu) \le - \frac{t^p}{p} \left(\frac{\lambda - \eps}{\lambda_k} - 1\right) + \o(t^p)
\end{equation}
for $u \in A_0$, and
\[
E(tu) \ge \frac{t^p}{p} \left(1 - \frac{\lambda + \eps}{\lambda_{k+1}}\right) + \o(t^p)
\]
for $u \in B_0$, so \eqref{24} holds for sufficiently small $\rho > 0$. So $E$ has a cohomological local splitting near zero in dimension $k$.

\ref{Theorem 3.i} By \eqref{26}, $E(tu) < 0$ for $u \in A_0$ and $t > 0$ sufficiently small, so $E(u_0) = \inf E < 0$ and hence $u_0$ is nontrivial.

\ref{Theorem 3.ii} Since $E$ has a cohomological local splitting near zero in dimension $k$, $C^k(E,0) \ne 0$ by Proposition \ref{Proposition 2}. For $a < \inf E$ and $b = + \infty$,
\[
H^k(E^b,E^a) = H^k(W^{1,\,p}_0(\Omega)) = 0
\]
since $k \ge 1$, so $E$ has a nontrivial critical point $u_1$ with either $E(u_1) < 0$ and $C^{k-1}(E,u_1) \ne 0$, or $E(u_1) > 0$ and $C^{k+1}(E,u_1) \ne 0$ by Proposition \ref{Proposition 1}. Since $\lambda > \lambda_2$, $k \ge 2$ and hence $C^{k-1}(E,u_0) = 0 = C^{k+1}(E,u_0)$ by \eqref{25}, so $u_1$ is distinct from $u_0$.
\end{proof}

\def\cdprime{$''$}


\begin{thebibliography}{10}

\bibitem{MR97k:35190}
A.~Anane and N.~Tsouli.
\newblock On the second eigenvalue of the {$p$}-{L}aplacian.
\newblock In {\em Nonlinear partial differential equations (F\`es, 1994)},
  volume 343 of {\em Pitman Res. Notes Math. Ser.}, pages 1--9. Longman,
  Harlow, 1996.

\bibitem{MR89e:35124}
Aomar Anane.
\newblock Simplicit\'e et isolation de la premi\`ere valeur propre du
  {$p$}-laplacien avec poids.
\newblock {\em C. R. Acad. Sci. Paris S\'er. I Math.}, 305(16):725--728, 1987.

\bibitem{MR699419}
Ha{\"{\i}}m Br{\'e}zis and Elliott Lieb.
\newblock A relation between pointwise convergence of functions and convergence
  of functionals.
\newblock {\em Proc. Amer. Math. Soc.}, 88(3):486--490, 1983.

\bibitem{MR1196690}
Kung-ching Chang.
\newblock {\em Infinite-dimensional {M}orse theory and multiple solution
  problems}, volume~6 of {\em Progress in Nonlinear Differential Equations and
  their Applications}.
\newblock Birkh\"{a}user Boston, Inc., Boston, MA, 1993.

\bibitem{MR2661274}
Marco Degiovanni, Sergio Lancelotti, and Kanishka Perera.
\newblock Nontrivial solutions of {$p$}-superlinear {$p$}-{L}aplacian problems
  via a cohomological local splitting.
\newblock {\em Commun. Contemp. Math.}, 12(3):475--486, 2010.

\bibitem{MR0478189}
Edward~R. Fadell and Paul~H. Rabinowitz.
\newblock Generalized cohomological index theories for {L}ie group actions with
  an application to bifurcation questions for {H}amiltonian systems.
\newblock {\em Invent. Math.}, 45(2):139--174, 1978.

\bibitem{MR2832280}
Ahmed Hamydy, Mohammed Massar, and Najib Tsouli.
\newblock Existence of solutions for {$p$}-{K}irchhoff type problems with
  critical exponent.
\newblock {\em Electron. J. Differential Equations}, pages No. 105, 8, 2011.

\bibitem{1078-0947_2021106}
Erisa Hasani and Kanishka Perera.
\newblock On the compactness threshold in the critical Kirchhoff equation.
\newblock {\em Discrete Contin. Dyn. Syst.}, 2021.

\bibitem{MR3651881}
Qin Li, Zuodong Yang, and Zhaosheng Feng.
\newblock Multiple solutions of a {$p$}-{K}irchhoff equation with singular and
  critical nonlinearities.
\newblock {\em Electron. J. Differential Equations}, pages Paper No. 84, 14,
  2017.

\bibitem{MR3503048}
Yuanxiao Li, Ming Mei, and Kaijun Zhang.
\newblock Existence of multiple nontrivial solutions for a {$p$}-{K}irchhoff
  type elliptic problem involving sign-changing weight functions.
\newblock {\em Discrete Contin. Dyn. Syst. Ser. B}, 21(3):883--908, 2016.

\bibitem{MR90h:35088}
Peter Lindqvist.
\newblock On the equation {${\rm div}\,(\vert \nabla u\vert \sp {p-2}\nabla
  u)+\lambda\vert u\vert \sp {p-2}u=0$}.
\newblock {\em Proc. Amer. Math. Soc.}, 109(1):157--164, 1990.

\bibitem{MR1139483}
Peter Lindqvist.
\newblock Addendum: ``{O}n the equation {${\rm div}(\vert \nabla u\vert \sp
  {p-2}\nabla u)+\lambda\vert u\vert \sp {p-2}u=0$}'' [{P}roc.\ {A}mer.\
  {M}ath.\ {S}oc.\ 109 (1990), no.\ 1, 157--164; {MR} 90h:35088].
\newblock {\em Proc. Amer. Math. Soc.}, 116(2):583--584, 1992.

\bibitem{MR982267}
Jean Mawhin and Michel Willem.
\newblock {\em Critical point theory and {H}amiltonian systems}, volume~74 of
  {\em Applied Mathematical Sciences}.
\newblock Springer-Verlag, New York, 1989.

\bibitem{MR3186916}
Anass Ourraoui.
\newblock On a {$p$}-{K}irchhoff problem involving a critical nonlinearity.
\newblock {\em C. R. Math. Acad. Sci. Paris}, 352(4):295--298, 2014.

\bibitem{MR1749421}
Kanishka Perera.
\newblock Critical groups of critical points produced by local linking with
  applications.
\newblock {\em Abstr. Appl. Anal.}, 3(3-4):437--446, 1998.

\bibitem{MR1700283}
Kanishka Perera.
\newblock Homological local linking.
\newblock {\em Abstr. Appl. Anal.}, 3(1-2):181--189, 1998.

\bibitem{MR1998432}
Kanishka Perera.
\newblock Nontrivial critical groups in {$p$}-{L}aplacian problems via the
  {Y}ang index.
\newblock {\em Topol. Methods Nonlinear Anal.}, 21(2):301--309, 2003.

\bibitem{MR2640827}
Kanishka Perera, Ravi~P. Agarwal, and Donal O'Regan.
\newblock {\em Morse theoretic aspects of {$p$}-{L}aplacian type operators},
  volume 161 of {\em Mathematical Surveys and Monographs}.
\newblock American Mathematical Society, Providence, RI, 2010.

\bibitem{MR2153141}
Kanishka Perera and Andrzej Szulkin.
\newblock {$p$}-{L}aplacian problems where the nonlinearity crosses an
  eigenvalue.
\newblock {\em Discrete Contin. Dyn. Syst.}, 13(3):743--753, 2005.

\bibitem{MR1325242}
Edwin~H. Spanier.
\newblock {\em Algebraic topology}.
\newblock Springer-Verlag, New York, 1994.
\newblock Corrected reprint of the 1966 original.

\bibitem{MR3430291}
Chenxing Zhou and Yueqiang Song.
\newblock Multiplicity of solutions for elliptic problems of {$p$}-{K}irchhoff
  type with critical exponent.
\newblock {\em Bound. Value Probl.}, pages 2015:223, 12, 2015.

\end{thebibliography}
\end{document}